\theoremstyle{theorem}
\newtheorem{theorem}{Theorem}
\newtheorem{corollary}{Corollary}
\newtheorem{lemma}{Lemma}
\theoremstyle{definition}
\newtheorem*{remark}{Remark}
\newcommand{\N}{\mathbb N}
\newcommand{\FLT}{\mathrm{FLT}}
\newcommand{\DM}{\mathrm{DM}}
\newcommand{\IRT}{\mathrm{IRT}}
\begin{document}

\title{
Fermat's Last Theorem Implies\\ Euclid's Infinitude of Primes}
\markright{Fermat's Last Theorem guarantees primes}
\author{Christian Elsholtz
}
\maketitle

\begin{abstract}
We show that Fermat's last theorem and a combinatorial theorem of Schur on 
monochromatic solutions of $a+b=c$ implies that there exist infinitely 
many primes. In particular, for small exponents such as $n=3$ or $4$ 
this gives a new proof of Euclid's theorem, 
as in this case Fermat's last theorem has a proof that does not use 
the infinitude of primes. Similarly, we discuss implications of
Roth's theorem on arithmetic progressions, Hindman's theorem, and infinite
Ramsey theory towards Euclid's theorem.
As a consequence we see that Euclid's Theorem is a necessary condition for many
interesting (seemingly unrelated) results in mathematics.
\end{abstract}

\section{Introduction.}
Imagine that the set of positive integers has only finitely many primes.
We will investigate consequences, and to become more creative with this, we imagine 
we live in an entirely different world, namely in a ``world with only finitely many primes.''
If you are a number theorist, then you will realize that a major part of analytic number theory
just vanishes. One of the implications of this article is that algebraic
number theorists and combinatorialists would live in a very different 
world, too.
The reason is that ``Fermat's last theorem'' even in the first interesting case 
with exponent $3$ would be wrong, that major parts of the modern subject of
additive combinatorics would disappear,
and that even basic results of infinite Ramsey theory would not exist. 
If you wonder why
this is the case, we invite you to a journey
of unexpected discoveries in the 
fictional ``world with only finitely many primes''!

 There are many proofs of Euclid's theorem stating that there exist
infinitely many primes. 
There is a very thorough bibliographic collection of 70 pages on a multitude of proofs of Euclid's theorem,
due to Me\v{s}trovi\'{c} \cite{Mestrovic}. 
Other collections are given by Ribenboim \cite{Ribenboim} 
and a very recent one by Granville \cite{GranvilleLMS}. 
For some recent proofs, see \cite{Saidak:2006, Wooley}.

Many of these proofs make use of an infinite sequence
with mutually coprime integers, such as $F_n=2^{2^n}+1$ (Goldbach, in a letter to Euler
1730), or primitive divisors of certain recursive sequences 
(see, e.g., \cite{Saidak:2006}).
Furstenberg \cite{Furstenberg} made use of a suitably defined topology to prove
Euclid's theorem.
A number of proofs have used the exponents of a prime factorization;
see, for example,
\cite{Elsholtz:2012, Erdos:1938, Polya:1918}.
Even more recently, two proofs \cite{Alpoge:2015, Granville}
made use of van der Waerden's theorem
applied to the \emph{patterns of exponents}.
Alpoge \cite{Alpoge:2015} introduced van der Waerden's theorem to this subject,
and Granville \cite{Granville} combined Alpoge's idea with
a theorem of Fermat, namely that there are no 
four squares in arithmetic progression.

Inspired by this new type of 
proof, we investigate which type of purely combinatorial results 
can be combined with some kind of arithmetic result to give new proofs that
there exist infinitely many primes. 
In this way we link Euclid's theorem to some very beautiful and significant 
results of modern mathematics.

Here is a brief outline of the article.
In Section {\ref{sec:Fermat}} we link Euclid's theorem
to Fermat's last theorem,  eventually 
proved by Wiles \cite{Wiles:1995}, and to a theorem of Schur (1916),
which is often considered to be 
the starting point of combinatorial number theory.\\
In Section {\ref{sec:Roth}}
the link is to a theorem of Roth (1953) on the density of integers
without arithmetic progressions.
An independent elementary proof of Euclid's theorem is a by-product, in Section
{\ref{sec:density}}.
Section {\ref{sec:discussion}} has some discussion about varying the
number-theoretic or combinatorial input.
Section {\ref{sec:Hindman}} uses
a theorem of Hindman (1974) on an infinite extension of
Schur's theorem, and Section {\ref{sec:IRT}} gives two proofs using
infinite Ramsey theory.

Roth's theorem and its extension by Szemer\'{e}di
\cite{Szemeredi:1975}, and 
quantitative versions thereof, 
(e.g., due to Bourgain \cite{Bourgain:2008}, Gowers \cite{Gowers:2001}, 
Green and Tao \cite{GreenandTao:2017})
has inspired many excellent mathematicians
 and has had tremendous impact on 
the relatively young field of additive combinatorics.

\section{Fermat's last theorem implies Euclid's theorem.}
{\label{sec:Fermat}}
We first state Schur's theorem and then the main result of this article.
\begin{lemma}[Schur's theorem \cite{Schur}, 1916]
For every positive integer $t$, there exists an integer $s_t$ such that
if one colors each integer $m \in[1,s_t]$ using one of $t$ distinct colors, 
then there is a monochromatic solution of $a+b=c,$ where $a,b,c \in [1,s_t]$.
\end{lemma}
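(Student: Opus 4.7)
The plan is to reduce Schur's theorem to the multicolor version of Ramsey's theorem for triangles: for every $t\ge 1$ there exists an integer $R_t$ such that every $t$-coloring of the edges of the complete graph $K_{R_t}$ contains a monochromatic triangle. Granting this, I would set $s_t:=R_t-1$ (or anything larger) and derive the desired statement combinatorially.

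Given an arbitrary $t$-coloring $\chi\colon\{1,2,\dots,s_t\}\to\{1,\dots,t\}$, I would consider the complete graph on the vertex set $\{0,1,\dots,s_t\}$, which has $R_t$ vertices, and color the edge joining $i<j$ by $\chi(j-i)$. This is legitimate because the difference $j-i$ always lies in $\{1,\dots,s_t\}$, so the edge coloring uses only the $t$ available colors. By the multicolor Ramsey theorem there exist three vertices $i<j<k$ whose three pairwise edges are identically colored, that is,
\[
\chi(j-i)\;=\;\chi(k-j)\;=\;\chi(k-i).
\]
Setting $a=j-i$, $b=k-j$, and $c=k-i$, I obtain three integers in $[1,s_t]$ all of the same color with $a+b=c$, which is exactly the monochromatic Schur triple demanded by the lemma.

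The main obstacle is of course the multicolor Ramsey theorem, which is itself nontrivial. I would either quote it as a black box (the standard inductive proof gives the quantitative bound $R_t\le\lfloor t!\,e\rfloor+1$) or, following Schur's original 1916 argument, prove the lemma directly by induction on $t$ using a pigeonhole step on consecutive differences. Both routes yield only rapidly-growing upper bounds on $s_t$, but for the applications in the rest of the paper only the finiteness of $s_t$ is needed. A minor technical point worth checking is that one must verify $a,b,c\le s_t$ in the monochromatic conclusion; this holds automatically since $c=k-i\le s_t$ by construction, and $a,b\le c$.
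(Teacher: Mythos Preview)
Your proposal is correct and follows essentially the same route as the paper: reduce to the multicolor Ramsey theorem for triangles, color the edge $\{i,j\}$ of a complete graph by $\chi(|i-j|)$, and read off a monochromatic Schur triple $a=j-i$, $b=k-j$, $c=k-i$ from a monochromatic triangle. The only cosmetic difference is that you take vertex set $\{0,1,\dots,s_t\}$ with $s_t=R_t-1$, while the paper takes $\{1,\dots,N\}$ with $N=R(3,\dots,3)$; your bookkeeping is slightly tighter, but the argument is the same.
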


\begin{theorem}{\label{thm:fermat-euclid}}
For $n \geq 3$ let $\FLT(n)$ denote the statement
``There are no solutions of the equation 
$x^n+y^n=z^n$ in positive integers $x,y,z$.''
Then ``$\FLT(n)$ is true'' and Schur's theorem imply that
there exist infinitely many primes.
\end{theorem}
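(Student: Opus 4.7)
The plan is to argue by contradiction: assume there are only finitely many primes $p_1, \ldots, p_k$, and build a $t$-coloring of the integers so that a monochromatic Schur triple produces a nontrivial solution to $x^n+y^n=z^n$.

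First I would color each positive integer $m = p_1^{e_1} \cdots p_k^{e_k}$ by its vector of exponents reduced modulo $n$, namely
\[
\chi(m) = (e_1 \bmod n, \, e_2 \bmod n, \, \ldots, \, e_k \bmod n) \in (\Z/n\Z)^k.
\]
This uses only $t = n^k$ colors. Applying Schur's theorem with this $t$, I obtain an integer $s_t$ so that any $t$-coloring of $[1, s_t]$ admits a monochromatic solution of $a+b=c$. Since our hypothetical world has only $k$ primes, every integer in $[1, s_t]$ is colored by $\chi$, and so we get $a, b, c \in [1, s_t]$ with $a + b = c$ and $\chi(a) = \chi(b) = \chi(c) = (r_1, \ldots, r_k)$.

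Next I would extract the FLT-style equation. The condition $\chi(a) = (r_1, \ldots, r_k)$ says that, in the factorization $a = \prod p_i^{a_i}$, each $a_i - r_i$ is a nonnegative multiple of $n$. Setting $m_0 = p_1^{r_1} \cdots p_k^{r_k}$, I can write
\[
a = \alpha^n m_0, \qquad b = \beta^n m_0, \qquad c = \gamma^n m_0
\]
for positive integers $\alpha, \beta, \gamma$. Substituting into $a+b=c$ and cancelling the common factor $m_0 > 0$ yields $\alpha^n + \beta^n = \gamma^n$ in positive integers, which directly contradicts $\FLT(n)$.

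The whole argument is short once one has the idea of coloring by exponent vectors modulo $n$, in the spirit of Alpoge and Granville. The only step requiring care is the cancellation: one needs that the three integers $a, b, c$ share \emph{the same} residue vector $(r_1, \ldots, r_k)$ so that the $n$-th power part factors out uniformly, which is exactly what ``monochromatic'' provides. No quantitative estimates are needed; the finiteness of the prime set is used only to guarantee that the number of colors $t = n^k$ is finite, which is precisely the input Schur's theorem demands.
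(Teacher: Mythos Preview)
Your proof is correct and follows essentially the same approach as the paper: color integers by the exponent vector modulo $n$ (using $n^k$ colors), apply Schur's theorem to obtain a monochromatic triple $a+b=c$, factor out the common $n$th-power-free part $m_0=\prod p_i^{r_i}$, and arrive at $\alpha^n+\beta^n=\gamma^n$, contradicting $\FLT(n)$. The paper adds only a brief remark that the coloring is still usable even if one worries about well-definedness of the factorization, but otherwise the argument and notation are nearly identical to yours.
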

Theorem \ref{thm:fermat-euclid} 
gives a new proof of Euclid's theorem for those exponents
for which a proof of $\FLT(n)$ independent of the infinitude of
primes exists.
This is certainly the case for $n=3,4,5$, 
where elementary proofs exist (see 
\cite{Edwards, Ribenboim-Fermatbook}).
It then trivially follows for
infinitely many exponents, for example for all multiples of $3$.
 The application
of Fermat's last theorem with general $n$ to Euclid's theorem
might possibly compete for the most indirect proof, 
but at present the proof with
general $n$ is not
actually a proof at all,
as Wiles's proof makes use of the fact that there exist infinitely many primes.

We briefly show that Schur's theorem nowadays can be seen as a 
direct consequence of Ramsey's theorem \cite{Ramsey} (1929).
Ramsey's theorem (see \cite[Theorem 10.3.1]{Cameron}) states that, 
for any number $t$ of colors (let us call them $1, \ldots , t$)
and positive integers $n_1, \ldots, n_t$
there exists an integer $R(n_1, \ldots, n_t)$ such that if the edges of the
complete graph on $R(n_1, \ldots, n_t)$ vertices are colored, there exists
an index $i$ and a monochromatic clique of size $n_i$ all of whose edges are of 
color $i$.
In our application we only need the case $n_1=\cdots =n_t=3$.

Let $\chi: \{1,\ldots, N\} \rightarrow \{1, \ldots , t\}$ be the coloring of the first 
$N=R(3, \ldots, 3)$ integers. Let us define a coloring
of the \emph{edges} of the complete graph with vertices 
$\{1, 2, \ldots, N\}$ as follows:
The edge  $(i, j)$ is given the color $\chi(|i-j|)$. Ramsey's
theorem guarantees that there is a monochromatic triangle. 
Let us denote  the vertices of this
triangle by $(i,j,k)$, where $i < j <k$.
Let $a = j - i, b = k - j$, and $c = k - i$.
Then $a,b,c$ all have the same color and $a+b=c$ holds.
This gives the required monochromatic solution.

\begin{proof}[Proof of Theorem \ref{thm:fermat-euclid}]
Suppose there exist only finitely many primes $p_1, \ldots, p_k$ (say).
Every postive integer
can be written as $m= \prod_{i=1}^k p_i^{e_i}$. We write
integers as an $n$th power times an $n$th power-free number.
Hence, writing $e_i=nq_i +r_i$ with \mbox{$0 \leq r_i \leq n-1$} gives
$m= \left(\prod_{i=1}^k p_i^{q_i}\right)^n \left( \prod_{i=1}^k
  p_i^{r_i}\right) =N(m)\times R(m)$ (say).
We use $n^k$ distinct colors, denoted by $(t_1, \ldots, t_k), 0 \leq t_i \leq
n-1$, and we
color the integer $m= \prod_{i=1}^k p_i^{n q_i +r_i}$ by $(r_1, \ldots ,r_k)$. 
By Schur's theorem there exists a monochromatic
triple $(a,b,c)$ such that $c=a+b$ and with a fixed color
$(r_1, \ldots , r_k)$, corresponding to
$R=\prod_{i=1}^k p_i^{r_i}$.
Here $a,b,c$ all contain the same factor $R$ and
we can write $a,b,c$ as $a=N(a)R,\, b=N(b)R,\, c=N(c)R$, 
with positive integers $N(a), N(b), N(c)$.
 Dividing by $R$
gives $N(a)+N(b)=N(c)$ with $n$th powers, which is a contradiction to 
$\FLT(n).$

It might seem that we require unique factorization, as for an integer
with distinct prime factorizations the coloring is not well-defined.
However, for an application of Schur's theorem
it is perfectly fine if an integer $m$ with hypothetical distinct 
prime factorizations is assigned only one of the colors.
(Assigning all corresponding colors to $m$ would be an alternative, but then
$\chi$ would not actually be a function.)
\end{proof}

It is of historic interest to note that Schur's motivation was to study
Fermat's equation modulo primes. Dickson had proved 
that there is no congruence obstruction to the Fermat equation, 
and Schur \cite{Schur} gave a simple proof of this.

\section{Roth's theorem implies Euclid's theorem.}
{\label{sec:Roth}}
The Fermat equation has also been studied with coefficients.
The case $x^n+y^n=2z^n$ in positive integers
has attracted special attention, as a solution in distinct positive integers
would mean that there exist $n$th powers $x^n< z^n< y^n$ 
in arithmetic progression.
It was conjectured by D\'{e}nes that for $n\geq 3$ there exist only
trivial solutions with $x=y=z$. This was proved by 
Darmon and Merel \cite{DarmonandMerel} based on
the methods of Wiles. 
Sierpi\'nski \cite{Sierpinski}
gives elementary proofs of the cases $n=4$ (Chapter 2, $\S 8$)
and $n=3$ (Chapter 2, $\S 14$); see also \cite{Carmichael}.
We also give new proofs of Euclid's theorem in these cases.

The following result gives a matching combinatorial tool.
\begin{lemma}[Roth \cite{Roth}]
Let $\delta>0$ and $N\geq N(\delta)$.
 Every subset $ S\subset[1,N]$ of at least $\delta N$ elements contains
three distinct elements $s_1,s_2,s_3\in S$ in arithmetic progression, 
i.e., $s_1+s_3=2s_2$.
\end{lemma}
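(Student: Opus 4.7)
\medskip
\noindent\textbf{Proof proposal.}
I would follow Roth's original Fourier-analytic density-increment strategy. Embed $[1,N]$ into $\Z/N'\Z$ for a prime $N'\in[2N+1,4N]$, chosen so that three-term APs from $[1,N]$ cannot wrap around. Setting $f=1_S$ and $\hat f(r)=\frac{1}{N'}\sum_x f(x)e^{-2\pi i rx/N'}$, a standard computation shows that the number of triples $(s_1,s_2,s_3)\in S^3$ with $s_1+s_3=2s_2$ equals ${N'}^2\sum_r \hat f(r)^2\hat f(-2r)$, and the $r=0$ contribution alone is $\delta^3{N'}^2$.

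The core dichotomy then runs as follows. If every non-zero Fourier coefficient satisfies $|\hat f(r)|\le c\delta^2$, then Parseval's identity $\sum_{r\neq 0}|\hat f(r)|^2\le\delta$ forces the error from $r\neq 0$ to be at most $c\delta^3{N'}^2$, so for $c$ small enough the count of non-trivial three-term APs exceeds the $|S|$ degenerate solutions with $s_1=s_2=s_3$. Otherwise some $r\neq 0$ satisfies $|\hat f(r)|\ge c\delta^2$; a Weyl-style argument partitions $[1,N]$ into arithmetic sub-progressions of length $\sim\sqrt{N}$ on which the character $x\mapsto e^{2\pi i rx/N'}$ is essentially constant, and pigeonholing then locates one such sub-progression $P$ on which $S$ has density at least $\delta+c'\delta^2$. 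Rescaling $P$ to an interval and iterating, one obtains a density sequence with $\delta_{k+1}\ge \delta_k+c'\delta_k^2$, so after $O(1/\delta)$ steps the density exceeds any fixed threshold below $1$, at which point a direct pigeonhole argument (e.g., comparing $|S\cap(S-d)|$ across $d$) produces the desired AP.

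The principal obstacle will be the interplay between density gain and length loss: each Fourier step passes to a sub-progression of length roughly the square root of the previous one, so the scheme only closes if $N(\delta)$ is taken at least of tower type in $1/\delta$. Calibrating the absolute constants $c,c'$ and handling the terminal regime (when $\delta_k$ is close to $1$ and a direct combinatorial argument must take over) are the bookkeeping points requiring the most care; any improvement in the quantitative bounds (as in the work of Bourgain, Gowers, or Green--Tao cited in the introduction) would come from refinements of precisely this increment step, but for the qualitative statement needed here the classical argument suffices.
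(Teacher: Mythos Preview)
The paper does not prove this lemma at all: Roth's theorem is simply quoted as a known input, with a citation to Roth's original paper and a pointer to a ``purely combinatorial'' proof in Graham--Rothschild--Spencer. So there is nothing in the paper to compare your argument against; your write-up is supplying a proof where the author deliberately chose to invoke the literature.

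That said, your sketch is the standard density-increment argument and is essentially sound. Two small corrections are worth flagging. First, with the normalisation you chose, $\hat f(0)=|S|/N'$ is only $\ge \delta/4$ (since $N'\le 4N$), so the main term is $c\delta^3{N'}^2$ for some absolute $c$, not literally $\delta^3{N'}^2$; this is harmless but should be tracked through the constants. Second, your remark that $N(\delta)$ must be ``at least of tower type in $1/\delta$'' overstates matters: the iteration runs for $O(1/\delta)$ steps (since $\sum_{j\ge 0}1/(2^j\delta)=O(1/\delta)$ increments suffice to push the density past $1/2$), and each step passes to a progression of length $\asymp\sqrt{\text{old length}}$, so one only needs $N\ge \exp\exp(C/\delta)$, i.e.\ a double exponential rather than a tower. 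The genuinely tower-type bounds arise in Szemer\'edi's theorem for longer progressions, or in the regularity-lemma proof that the paper alludes to via the Graham--Rothschild--Spencer reference; the Fourier route you outline is precisely what avoids that.
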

It should be noted that there is a purely combinatorial proof of Roth's theorem,
e.g., in \cite[pp. 46--49]{Graham-Rothschild-Spencer}. 
In contrast to van der Waerden's and
Schur's theorem the above statement
is a so-called ``density version'': this result not only guarantees
monochromatic solutions in some unspecified color, but even in all those 
colors that occur with a positive density.
\begin{theorem}
For $n \geq 3$ let $\DM(n)$ denote the statement
``There are no three positive $n$th powers in arithmetic progression''
or equivalently 
``There are no solutions of the equation 
$x^n+y^n=2z^n$ in positive integers $x<z<y$.''
Then ``$\DM(n)$ is true'' and Roth's theorem imply that
there exist infinitely many primes.
\end{theorem}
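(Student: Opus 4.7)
The plan is to mimic the structure of the proof of Theorem~\ref{thm:fermat-euclid}, replacing Schur's theorem with Roth's theorem and $\FLT(n)$ with $\DM(n)$. First I would assume for contradiction that the set of primes is the finite list $p_1,\ldots,p_k$, and use the same decomposition as in the previous proof: every positive integer $m$ factors as $m=\prod_{i=1}^k p_i^{e_i}$, and writing $e_i=nq_i+r_i$ with $0\le r_i\le n-1$ we get $m=N(m)^n R(m)$, where $R(m)=\prod_{i=1}^k p_i^{r_i}$. There are only $n^k$ possible values for the tuple $(r_1,\ldots,r_k)$, so the integers in $[1,N]$ are partitioned into at most $n^k$ classes according to their \emph{$n$th power-free part} $R(m)$.

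Next I would apply pigeonhole: one such class $S\subset[1,N]$ must contain at least $N/n^k$ elements, i.e., has density at least $\delta:=1/n^k$ in $[1,N]$. Crucially, $\delta$ depends only on the (supposedly finite) number of primes $k$ and on $n$, so it is a fixed positive constant. For $N\ge N(\delta)$, Roth's theorem then produces three distinct elements $s_1<s_2<s_3$ in $S$ forming an arithmetic progression, $s_1+s_3=2s_2$.

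Since $s_1,s_2,s_3$ all lie in the same class, they share a common $n$th power-free part $R$, so I can write $s_1=A^n R$, $s_2=Z^n R$, $s_3=B^n R$ with positive integers $A<Z<B$. Dividing $s_1+s_3=2s_2$ by $R$ yields $A^n+B^n=2Z^n$ with $A<Z<B$, which contradicts $\DM(n)$. The well-definedness issue about unique factorization is handled exactly as before: even under hypothetical non-unique factorization, we simply choose one assignment of $R(m)$ per integer, which only decreases the density of some classes and still leaves one class with density $\ge 1/n^k$.

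The main obstacle to verify is that $\delta=1/n^k$ is indeed independent of $N$, so that Roth's theorem applies for all sufficiently large $N$; this is where the assumption of only finitely many primes is used in an essential way (if $k$ were allowed to grow with $N$, the density bound would collapse). Beyond that, the argument is a straightforward translation of the Schur--Fermat proof into the density/arithmetic-progression setting.
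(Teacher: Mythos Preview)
Your argument is correct, but it follows a different route from the paper's own proof. You parallel Theorem~\ref{thm:fermat-euclid} closely: partition $[1,N]$ into the $n^k$ residue classes determined by the $n$th-power-free part $R(m)$, use pigeonhole to find one class of density at least $1/n^k$, apply Roth's theorem to that class, and then divide the resulting progression by the common factor $R$ to obtain $A^n+B^n=2Z^n$ with $A<Z<B$.

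The paper instead proves an auxiliary lemma (Lemma~\ref{positive-density}) showing that, under the finitely-many-primes hypothesis, the set of $n$th powers \emph{itself} has positive density in $[1,N]$; this is established by a counting argument comparing exponent patterns. Roth's theorem is then applied directly to the set of $n$th powers, yielding three distinct $n$th powers in arithmetic progression and contradicting $\DM(n)$ immediately---with no division by $R$ required, as the paper explicitly notes.

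Your pigeonhole step is certainly simpler than the exponent-counting of Lemma~\ref{positive-density}, and your version stays closer in spirit to the Schur/Fermat proof. The paper's route, on the other hand, isolates the ``$n$th powers have positive density'' statement as a stand-alone lemma, which it then reuses in Section~\ref{sec:density} to give an independent elementary proof of Euclid's theorem (comparing $\delta N$ against the trivial upper bound $N^{1/n}$). So the extra work in Lemma~\ref{positive-density} pays a dividend that your shortcut does not.
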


\begin{proof}
We first prove the following (possibly surprising) lemma.
\begin{lemma}{\label{positive-density}}
 Suppose there exist only finitely many primes $p_1< \cdots < p_k$.
The set of $n$th powers is a positive proportion of all
integers, i.e., there exists some $\delta=\delta(n,k) >0$ such that for all $N$
the set of $n$th powers in $[1,N]$ is at least $\delta N$.
\end{lemma}
\begin{proof}[Proof of Lemma]
We prove this by
dividing a lower bound approximation of the number of $n$th powers in $[1,N]$
by an upper
bound approximation of all integers in $[1,N]$, both counted by means of
exponent patterns.
The upper bound
on the number of possible exponent patterns $(e_1, e_2, \ldots ,e_k)$ follows
from $p_1^{e_1} \cdots p_k^{e_k}\leq N$, which gives $e_i \leq \frac{\log
  N}{\log p_i}$. Hence 
$(1+\frac{\log N}{\log p_1}) \cdots (1+\frac{\log N}{\log p_k})$ 
is an upper bound.
For the lower bound on the number of $n$th powers, we 
count those $e_i$ divisible by $n$ and with 
$p_i^{e_i}\leq N^{1/k}$ for all $i=1, \ldots ,k$. We see that
at least $\lfloor 1+ \frac{\log N}{nk\log p_1}\rfloor 
\cdots \lfloor 1+\frac{\log N}{nk\log p_k}\rfloor $ of all integers at most $N$
are $n$th powers, 
which gives (for large $N$) a positive proportion of at least
$\delta\geq \frac{\textrm{ lower bound}}{\textrm{upper bound}}\geq
\frac{C}{(nk)^k}$, for some $C>0$.
\end{proof}
With this lemma we can replace Schur's theorem by Roth's theorem.
Roth's theorem directly guarantees 
that there exists a nontrivial arithmetic progression of $n$th
powers, 
which is in contradiction to $\DM(n)$. 
(Note that in this case there is no need to divide by 
the factor $R$ of the first proof.)
\end{proof}

\begin{remark}
The results by van der Waerden (used by Alpoge and Granville) and Schur or Roth
(used here)
are early results of Ramsey theory. The numerical bounds on $s_t$ implied
by Schur's theorem are moderate, compared to the very quickly increasing
bounds in van der Waerden's theorem.
Let $s_t$ denote the least number such that for any $t$-coloring, which is a
map $\chi:\{1, \ldots , s_t\} \rightarrow \{1, \ldots , t\},$ there exist
$a,b,c$ with $a+b=c$ and $\chi(a)=\chi(b)=\chi(c)$. It follows from Schur's proof that
$s_t \leq \lfloor t!e\rfloor $.

\end{remark}

\section{Positive density gives a new elementary proof.}
{\label{sec:density}}

The observation about ``positive density'' in Lemma \ref{positive-density}
also leads to a short and new proof of Euclid's theorem:
\begin{proof}
  Lemma {\ref{positive-density}} says the number of
  $n$th powers (for any fixed $n \geq 2$)
  has positive density in the set of positive integers.
But it is also clear that there are at most
$N^{1/n}$  positive $n$th powers $x^n \leq N$,
contradicting the lower bound of $\delta N$ (for some fixed $\delta >0$)
for sufficiently large $N$. Comparing with
the bibliography \cite{Mestrovic}, the proof closest in spirit
appears to be Chaitin's proof \cite{Chaitin}.
\end{proof}
We note that the main focus of this article is not about short proofs 
but how seemingly remote results can be applied.

\section{Discussion on variants of the proofs above:
  the Frankl--Graham--R\"{o}dl theorem and
  Folkman's theorem.}
{\label{sec:discussion}}
\begin{enumerate}
\item 
We now discuss that knowing something more on the combinatorial side,
namely knowing about the number of monochromatic 
solutions, helps in reducing the number-theoretic input considerably.

On the combinatorial side, 
Frankl, Graham, and R\"odl \cite{Frankl-Graham-Rodl:1988} proved 
that with $t$ colors the number of monochromatic solutions 
$(a,b,c)$ of the equation $a+b=c$  with $a,b,c \in [1,N]$ increases
quadratically, i.e., there is a positive constant 
$c_t$ such that the number $S(t,N)$ of solutions is at least $c_t N^2$.
(In fact, \cite{Frankl-Graham-Rodl:1988}
gives a direct proof for the Schur equation, 
but also covers much more general cases.)
As in the proof of Theorem \ref{thm:fermat-euclid},
the monochromatic solutions of $a+b=c$
correspond to solutions of $x^n+y^n=z^n$ in positive integers.

On the number-theoretic side there are several reasons why the number of
solutions is smaller, giving a contradiction to the assumption ``there are 
finitely many primes only.''

A result of Faltings \cite{Faltings:1983} would give there are at most $O(N)$ 
solutions of $x^n+y^n=z^n$ with $x^n,y^n,z^n\in [1,N]$, being coprime in pairs.
A much more elementary approach is as follows:
For odd $n$ the left-hand side of $x^n+y^n=z^n$ can be factored as 
$(x+y)\sum_{i=0}^{n-1}(-1)^ix^{n-1-i}y^i$. In particular, when
$n=3$ this is $x^3+y^3=(x+y)(x^2-xy+y^2)$.
The number of divisors of any
integer $z^n \leq N$ is clearly at most $\sqrt{N}$. (Actually, as we assume
there are at most $k$ prime factors, this can be improved to $C_k (\log N)^k$.)

Hence the number-theoretic upper bound of at most $N$ values of $z^n$ 
with at most $\sqrt{N}$ factorizations each
and the combinatorial lower bound of at least $c_t N^2$ solutions
contradict each other.

This remark also applies in the situation of $x^n+y^n=2z^n$, as using a
result of Varnavides \cite{Varnavides:1959} one can also
prove that in this situation there would be at least $c_t N^2$ 
many solutions, with $x,y,z\leq N$, contradicting as before the 
number-theoretic upper bound.

\item{\label{Folkman}} For the combinatorial lemma there are other alternatives.
For example, a theorem of Folkman  \cite[p.~81]{Graham-Rothschild-Spencer} 
guarantees much larger monochromatic
structures than Schur's theorem does: 
For every number $t$ of colors, every
coloring $\chi: \N \rightarrow \{1, \ldots , t\}$, and every $s\in \N$, 
there exist $N_{s,t}$ and $a_1, \ldots , a_s \in [1,N_{s,t}]$
with the property that all nontrivial subset sums 
$\sum_{i \in I} a_i$, where $I\subseteq \{1, \ldots , s\}$ is nonempty,
are monochromatic.
In analogy with the proof of Theorem \ref{thm:fermat-euclid},
this would mean, in the special case $s=3$, applied with the same coloring 
and after dividing by the common factor $R$, that all of
$a'_1,a'_2,a'_3, a'_1+a'_2,a'_1+a'_3,a'_2+a'_3,a'_1+a'_2+a'_3$ are $n$th
powers. 
Proving that this is impossible could be easier than proving $\FLT(n)$,
as $\FLT(n)$ corresponds to $s=2$ with fewer conditions. But we are not
aware of any literature on this.

\end{enumerate}

\section{Hindman's theorem implies Euclid's theorem.}
{\label{sec:Hindman}}
Let us explicitly write down an extreme form of the above remark on
Folkman's theorem.
An extention of Folkman's theorem is Hindman's theorem  
\cite{Hindman:1974}; see 
also \cite{Baumgartner:1974} and 
\cite[p. 85]{Graham-Rothschild-Spencer}. 
\begin{lemma}
For any integer $t\geq 2$ and any $t$-coloring 
$\chi:\N \rightarrow \{1, \ldots , t\}$, there exists an infinite sequence 
$A=\{a_1, a_2, \ldots \}$ such that all subset sums 
$\sum_{i \in I} a_i$ over nonempty 
finite index sets $I\subset \N$ are monochromatic.
\end{lemma}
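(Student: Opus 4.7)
The plan is to prove Hindman's theorem by the Galvin--Glazer argument using idempotent ultrafilters in $\beta\N$, the Stone--\v{C}ech compactification of $\N$. First I would realize $\beta\N$ concretely as the set of ultrafilters on $\N$, topologized by the basic clopen sets $\{p : A\in p\}$ for $A\subseteq\N$; this makes $\beta\N$ a compact Hausdorff space into which $\N$ embeds via its principal ultrafilters.

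Next I would extend addition from $\N$ to $\beta\N$ by setting $A\in p+q$ iff $\{n\in\N : A-n\in q\}\in p$. A short computation shows this operation is associative and that, for each fixed $q$, the right-translation $p\mapsto p+q$ is continuous, so $(\beta\N,+)$ is a compact right-topological semigroup.

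The decisive step is Ellis's theorem: every compact right-topological semigroup contains an idempotent. This yields $p\in\beta\N$ with $p+p=p$. Given a $t$-coloring $\chi:\N\to\{1,\dots,t\}$, exactly one color class $C$ lies in the ultrafilter $p$. Because $C\in p+p$, the set $D:=\{n : C-n\in p\}$ lies in $p$, so $D\cap C\in p$ and we pick $a_1$ there. Maintaining inductively the invariant that, after choosing $a_1<\cdots<a_k$ whose nonempty subset sums all lie in $C$, the set $C_k := C\cap\bigcap_{\emptyset\neq I\subseteq\{1,\dots,k\}}\bigl(C-\textstyle\sum_{i\in I}a_i\bigr)$ still belongs to $p$, idempotency forces $\{n : C_k-n\in p\}\cap C_k$ to lie in $p$ and hence be nonempty; any $a_{k+1}$ drawn from it with $a_{k+1}>a_k$ preserves the invariant, because for every nonempty $I\subseteq\{1,\dots,k\}$ we have both $a_{k+1}\in C_k\subseteq C-\sum_{i\in I}a_i$ (placing the sum $a_{k+1}+\sum_{i\in I}a_i$ in $C$) and $a_{k+1}\in C$ itself.

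The main obstacle is Ellis's theorem, which rests on a Zorn argument on the poset of nonempty closed subsemigroups of $\beta\N$ together with the right-continuity of $\rho_q$ at the final minimization step, where one shows a minimal left ideal inside a minimal closed subsemigroup collapses to a single idempotent. A purely combinatorial alternative is Baumgartner's proof, but it is considerably longer; the ultrafilter route compresses all of the necessary compactness into one clean algebraic statement, after which the sequence $a_1,a_2,\ldots$ practically constructs itself from the identity $p+p=p$.
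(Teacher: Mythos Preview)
The paper does not actually prove this lemma; it is stated as Hindman's theorem with references to \cite{Hindman:1974}, \cite{Baumgartner:1974}, and \cite{Graham-Rothschild-Spencer}, and is then used as a black box in the proof of Theorem~\ref{thm:Hindman-implies-Euclid}. So there is no ``paper's own proof'' to compare against.

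Your Galvin--Glazer argument via an idempotent ultrafilter in $(\beta\N,+)$ is a correct and standard route to Hindman's theorem. The definition of $p+q$, the verification that $\rho_q:p\mapsto p+q$ is continuous, and the appeal to the Ellis--Numakura lemma are all accurate. One small point: when you say that choosing $a_{k+1}\in\{n:C_k-n\in p\}\cap C_k$ ``preserves the invariant,'' your justification only explicitly checks that all new subset sums land in $C$; you should also spell out why $C_{k+1}\in p$. This follows because every set appearing in the intersection defining $C_{k+1}$ either already appears in $C_k$ (for index sets $J\subseteq\{1,\dots,k\}$) or contains $C_k-a_{k+1}$ (for index sets $J$ containing $k+1$), and both $C_k$ and $C_k-a_{k+1}$ lie in $p$ by your choice of $a_{k+1}$. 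With that line added, the inductive step is complete.
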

\begin{theorem}{\label{thm:Hindman-implies-Euclid}}
Hindman's theorem implies Euclid's theorem.
\end{theorem}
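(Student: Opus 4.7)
The plan is to follow essentially the same template as the proof of Theorem \ref{thm:fermat-euclid}, with Hindman's theorem replacing Schur's theorem to produce an infinite monochromatic structure, which one then exploits via $\FLT(n)$ for a small exponent where an elementary proof is available. So the result will really be ``Hindman + $\FLT(n)$ for small $n$ implies Euclid,'' which is legitimate because for $n=3,4$ we have elementary proofs of $\FLT(n)$ that do not use the infinitude of primes.

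First, I would suppose for contradiction that there are only finitely many primes $p_1, \ldots, p_k$ and fix $n = 3$ (or $n = 4$). Exactly as in Theorem \ref{thm:fermat-euclid}, define the $n^k$-coloring of $\N$ that sends $m = \prod_{i=1}^k p_i^{nq_i + r_i}$ with $0 \le r_i < n$ to the color $(r_1, \ldots, r_k)$. Applying Hindman's theorem to this coloring yields an infinite sequence $a_1 < a_2 < \cdots$ such that every nonempty finite subset sum $\sum_{i \in I} a_i$ receives a common color $(r_1, \ldots, r_k)$. Setting $R = \prod_{i=1}^k p_i^{r_i}$, this precisely says that every such subset sum equals $R$ times an $n$th power.

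Now I would extract just the Schur-type triple $(a_1,a_2,a_1+a_2)$: writing $a_1 = R x^n$, $a_2 = R y^n$, and $a_1 + a_2 = R z^n$ and dividing by $R$ gives $x^n + y^n = z^n$ in positive integers, contradicting $\FLT(n)$.

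The main observation worth recording is that there is no real obstacle here, because only the single triple $(a_1,a_2,a_1+a_2)$ has been used; Hindman's theorem is severely overkill for the deduction, since Schur's theorem alone already suffices (as in Theorem \ref{thm:fermat-euclid}). The genuinely interesting -- and, it seems, open -- challenge would be to exploit the vastly richer Hindman conclusion, that an \emph{entire infinite set} of integers has \emph{all} of its finite subset sums equal to $R$ times an $n$th power, so as to derive the contradiction from an arithmetic statement strictly weaker than $\FLT(n)$, in the spirit of the remarks on Folkman's theorem in Section \ref{sec:discussion}. Absent such an argument, one simply invokes elementary $\FLT(3)$ or $\FLT(4)$ as above.
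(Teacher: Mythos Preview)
Your argument is correct, but it misses precisely the point of the paper's proof. You reduce Hindman's theorem to a single Schur triple and then invoke $\FLT(3)$ or $\FLT(4)$; as you yourself note, this makes Hindman ``severely overkill'' and is really just Theorem~\ref{thm:fermat-euclid} in disguise. The paper, by contrast, does exactly what you describe as ``the genuinely interesting --- and, it seems, open --- challenge'': it exploits the \emph{infinite} Hindman structure to eliminate the need for $\FLT(n)$ altogether. After dividing by $R$, one has an infinite set all of whose finite subset sums are $n$th powers; in particular the first element $a_1'=x^n$ is a fixed $n$th power, and for each $i\ge 2$ both $a_i'=y_i^n$ and $a_1'+a_i'=z_i^n$ are $n$th powers, so $z_i^n-y_i^n=x^n$ is a fixed positive constant realised as a difference of $n$th powers infinitely often. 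This is impossible already for $n\ge 2$, since $z^n-(z-1)^n\ge z^{n-1}\to\infty$. Thus the paper trades the deep arithmetic input $\FLT(n)$ for the trivial observation that gaps between consecutive $n$th powers are unbounded, and this trade is exactly what the stronger combinatorial lemma buys.
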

\begin{proof}
We start as in the proof of Theorem {\ref{thm:fermat-euclid}}.
Suppose there exist only finitely many primes $p_1, \ldots, p_k$ (say).
Every integer
can be written as $m= \prod_{i=1}^k p_i^{e_i}$, 
$e_i=nq_i +r_i$ with \mbox{$0 \leq r_i \leq n-1$}. That is, 
$m= \left(\prod_{i=1}^k p_i^{q_i}\right)^n \left( \prod_{i=1}^k
  p_i^{r_i}\right) =N(m)\times R(m)$ (say).
We
color the integer $m= \prod_{i=1}^k p_i^{n q_i +r_i}$ by $(r_1, \ldots ,r_k)$. 
By Hindman's theorem there exists an infinite set such that
all nonempty finite subset sums are monochromatic with a fixed color
$(r_1, \ldots , r_k)$, corresponding to
$R=\prod_{i=1}^k p_i^{r_i}$.
 Dividing by $R$
gives an infinite set such that all finite subset sums
are $n$th powers.

This would
in particular correspond to some fixed $x^n$ and infinitely many 
pairs $(y_i^n,z_i^n)$ of $n$th powers such that $x^n+y_i^n=z_i^n$ holds.
This is clearly impossible, as the difference between
consecutive $n$th powers $z^n-(z-1)^n\geq z^{n-1}$ increases when $n\geq 2$ 
is fixed and $z$ increases. 
\end{proof}
\begin{remark}
The proof of Hindman's theorem is not trivial, but it is certainly much more
accessible than $\FLT(n)$ for general $n$. Moreover, the proof of Hindman's
theorem does not make use of Euclid's theorem, in contrast to Wiles's proof of
$\FLT$.

\end{remark}

\section{Infinite Ramsey theory implies Euclid's theorem.}
{\label{sec:IRT}}
The above proof does not need the full strength of Hindman's theorem, as
it essentially only uses sums of two elements.
Hence it is possible to reduce the combinatorial input accordingly, which we
discuss below.
\begin{lemma}[The Infinite Ramsey Theorem $\IRT$, see 
e.g., {\cite[Theorem 9.1.2]{Diestel}}]\label{lemma:IRT}

Let $X$ be some infinite set and color all subsets of $X$ of size $w$
with $t$ different colors. Then there exists some infinite subset
$M\subset X$ such that the subsets of $M$ of size $w$ all have the same color.
\end{lemma}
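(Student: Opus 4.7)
The plan is to prove the Infinite Ramsey Theorem by induction on the subset size $w$, with the case $w=1$ given by the infinite pigeonhole principle: partitioning an infinite set into $t$ classes forces at least one class to be infinite, which is exactly the desired conclusion.

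For the inductive step, suppose the result is known for $(w-1)$-subsets and any number of colors. I would recursively build a sequence of elements $x_1, x_2, \ldots \in X$, a strictly decreasing chain of infinite subsets $X = X_0 \supset X_1 \supset X_2 \supset \cdots$, and colors $c_1, c_2, \ldots \in \{1, \ldots, t\}$ as follows. Given $X_{n-1}$ infinite, choose any $x_n \in X_{n-1}$ and define an auxiliary $t$-coloring of the $(w-1)$-subsets of $X_{n-1} \setminus \{x_n\}$ by the rule $S \mapsto \chi(S \cup \{x_n\})$. The inductive hypothesis then yields an infinite $X_n \subseteq X_{n-1} \setminus \{x_n\}$ on which this auxiliary coloring is constant, with value $c_n$.

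Since $(c_n)$ takes only finitely many values, some color $c$ appears at infinitely many indices $n_1 < n_2 < \cdots$; I would take $M = \{x_{n_k} : k \geq 1\}$. To verify that every $w$-subset of $M$ has color $c$, consider $\{x_{n_{k_1}}, \ldots, x_{n_{k_w}}\}$ with $k_1 < \cdots < k_w$. By the nesting, $x_{n_{k_2}}, \ldots, x_{n_{k_w}}$ all lie in $X_{n_{k_1}}$, so by the definition of the auxiliary coloring at stage $n_{k_1}$, the $\chi$-color of this $w$-subset equals $c_{n_{k_1}} = c$.

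The conceptual content is modest; the real work is organizational, namely keeping the recursion, the nested chain, and the final pigeonhole extraction properly aligned. The one small point that deserves care is verifying that $x_j \in X_n$ whenever $j > n$, which follows immediately from the construction since $x_{n+1} \in X_n$ and each $X_{m+1} \subseteq X_m$. This nesting is the only property of the $X_n$ that the monochromaticity argument actually uses.
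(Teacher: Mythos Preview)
Your argument is the standard and correct proof of the Infinite Ramsey Theorem: induction on $w$, with the inductive step building a nested chain of infinite sets via the auxiliary $(w-1)$-coloring, followed by a pigeonhole extraction on the colors $c_n$. The verification that every $w$-subset of $M$ receives color $c$ is handled correctly, and your remark that the only property needed is $x_j \in X_n$ for $j>n$ is exactly right.

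Note, however, that the paper does not actually supply its own proof of this lemma; it merely states the result and refers to \cite[Theorem 9.1.2]{Diestel}. The proof found there is precisely the inductive argument you have written, so your proposal aligns with the intended reference.
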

In plain words, the case $w=2$ of Lemma \ref{lemma:IRT}
says that a finite coloring of the complete
graph $K_{\infty}$ guarantees a complete monochromatic $K_{\infty}$ as a
subgraph.
\begin{theorem}{\label{thm:IRT-implies-Euclid}}
The infinite Ramsey theorem $\IRT$ implies Euclid's theorem.
\end{theorem}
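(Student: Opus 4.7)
The plan is to follow the same strategy as the proof of Theorem \ref{thm:Hindman-implies-Euclid}, replacing the sum-closed infinite structure supplied by Hindman's theorem with the pair-difference structure supplied by $\IRT$ at $w=2$. Assume there are only finitely many primes $p_1,\ldots,p_k$, and fix an exponent $n\geq 3$ for which $\FLT(n)$ admits a proof independent of Euclid's theorem (say $n=3$ or $n=4$). Color each positive integer $m=\prod p_i^{e_i}$ by the tuple $(r_1,\ldots,r_k)\in\{0,\ldots,n-1\}^k$ with $e_i\equiv r_i\pmod n$, giving an $n^k$-coloring of $\N$. Transfer this to an edge coloring of the complete graph on $\N$ by assigning to the edge $\{i,j\}$ with $i<j$ the color of $j-i$, exactly as in the derivation of Schur's theorem from finite Ramsey recalled earlier in the paper.

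Now apply Lemma \ref{lemma:IRT} in the case $w=2$ to obtain an infinite set $M=\{v_1<v_2<\cdots\}\subset\N$ all of whose pairs receive the same color. By construction this means that every pairwise difference $v_j-v_i$ carries a single fixed color $(r_1,\ldots,r_k)$, which corresponds to a fixed squarefree-type factor $R=\prod_{i=1}^k p_i^{r_i}$. Consequently each difference has the shape $v_j-v_i = R\cdot N(v_j-v_i)$ where $N(v_j-v_i)$ is a positive $n$th power; equivalently, after dividing by $R$, every pairwise difference of $M$ is a perfect $n$th power.

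Applied to the first three elements, the identity $(v_2-v_1)+(v_3-v_2)=v_3-v_1$ reads $Ra^n+Rb^n=Rc^n$ with $a,b,c$ positive integers, so $a^n+b^n=c^n$, contradicting $\FLT(n)$. I do not expect a genuine obstacle here: $\IRT$ at $w=2$ is tailored exactly to produce an infinite set with monochromatic differences, and a single application of the elementary $\FLT(3)$ (or $\FLT(4)$) refutes this once the exponent-pattern coloring has been installed. A natural variant, giving the second of the two promised proofs, is to first derive Schur's theorem from the $w=2$ case of $\IRT$ by the very same edge-coloring trick (any three vertices of an infinite monochromatic clique already furnish a Schur triple) and then quote Theorem \ref{thm:fermat-euclid}; the direct argument above is essentially the Hindman-style shortcut.
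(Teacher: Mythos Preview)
Your argument is correct, but it is not quite the route the paper has in mind. The paper explicitly flags this theorem as a variant of Theorems~\ref{thm:Hindman-implies-Euclid} and~\ref{thm:K2inf-implies-Euclid}, and the whole point of Sections~\ref{sec:Hindman}--\ref{sec:IRT} is that once the combinatorial input is strengthened to an \emph{infinite} monochromatic structure, the number-theoretic input can be weakened from $\FLT(n)$ to the trivial fact that gaps between consecutive $n$th powers grow. Concretely: having produced $M=\{v_1<v_2<\cdots\}$ with every difference equal to $R$ times an $n$th power, fix $v_1,v_2$ and let $j\to\infty$; writing $v_j-v_1=Rc_j^{\,n}$ and $v_j-v_2=Rb_j^{\,n}$ gives $c_j^{\,n}-b_j^{\,n}=(v_2-v_1)/R$, a fixed constant realised as the difference of infinitely many pairs of $n$th powers, which is impossible for any $n\ge 2$.

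What the paper's route buys is that no appeal to $\FLT(n)$ (and hence no caveat about choosing $n=3$ or $4$) is needed; the infinitude of $M$ is genuinely used. What your route buys is economy on the combinatorial side: you only use three vertices of $M$, so the full $\IRT$ is overkill---indeed your ``direct'' proof and your ``second variant'' are the same argument, namely Theorem~\ref{thm:fermat-euclid} via the derivation of Schur's theorem from (finite) Ramsey. Calling it ``the Hindman-style shortcut'' is a slight misnomer, since the Hindman proof in the paper avoids $\FLT$ altogether and relies instead on the gap argument.
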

We leave the proof of Theorem \ref{thm:IRT-implies-Euclid}
as an exercise to the reader, and only
remark it is a variant of the Theorem
{\ref{thm:Hindman-implies-Euclid}} and our final Theorem
{\ref{thm:K2inf-implies-Euclid}}.

It turns out that one does not actually need an
infinite complete monochromatic graph, but only
a monochromatic complete bipartite graph $K_{2, \infty}$,
where one set of the vertices 
consists of two elements and the other one is infinite (say countable).

We give a complete proof of this and the application to Euclid's theorem below.
To prove the existence of this infinite substructure is quite simple.
\begin{lemma}[The $\boldsymbol K_{2, \infty}$ Lemma]
Let $X$ be some infinite set and color all pairs of two distinct elements
of $X$
with $t$ different colors. Then there exist a set $V=\{v_1,v_2\}\subset X$ and
an infinite set $W=\{w_1,w_2, \ldots \}\subset X\backslash V$ such that all
edges $(v_i,w_j)$, with $i\in \{1,2\}$ and $j \in \N$, have the same color.
\end{lemma}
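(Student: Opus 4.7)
The plan is to apply iterated pigeonhole: since only $t$ colors are available and $X$ is infinite, a growing sequence of test vertices must eventually produce two whose distinguished color-class matches, and that repetition yields the desired $K_{2,\infty}$.

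Concretely, I would pick $v_1 \in X$ arbitrarily, then use pigeonhole on the $t$-coloring of edges from $v_1$ to $X \setminus \{v_1\}$ to extract an infinite subset $X_1$ on which every edge to $v_1$ carries a single color $c_1$. Picking $v_2 \in X_1$ and repeating the argument inside $X_1 \setminus \{v_2\}$ produces an infinite $X_2$ and a color $c_2$. Iterating this step $t+1$ times gives vertices $v_1, \ldots, v_{t+1}$, a nested chain of infinite sets $X \supset X_1 \supset \cdots \supset X_{t+1}$, and colors $c_1, \ldots, c_{t+1}$ drawn from a palette of size $t$.

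By pigeonhole, two of these colors coincide: $c_i = c_j$ for some $i < j$. The desired configuration is then $V = \{v_i, v_j\}$ (relabel as $\{v_1, v_2\}$) and $W = X_j$. For every $w \in W$, the containment $w \in X_j \subset X_i$ gives $\chi(v_i, w) = c_i$, while $\chi(v_j, w) = c_j$ by construction, and the two agree. The nested construction $X_m \subset X_{m-1} \setminus \{v_m\}$ automatically keeps each $v_m$ out of all later sets, so $V \cap W = \emptyset$ and $W$ is infinite.

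There is no substantial obstacle here; the exercise is really just bookkeeping, in line with the paper's own remark that the construction is ``quite simple.'' A minor variation — running the construction indefinitely and then pigeonholing on the resulting infinite color sequence — in fact produces an infinite monochromatic clique, recovering the case $w=2$ of the infinite Ramsey theorem stated in Lemma \ref{lemma:IRT}; so the $K_{2,\infty}$ Lemma is genuinely weaker, but its proof follows the same skeleton cut off after only finitely many steps.
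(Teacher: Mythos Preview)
Your proof is correct and mirrors the paper's step-by-step construction: refine $X$ through a nested chain $X_1\supset\cdots\supset X_{t+1}$ by pigeonholing on edge colors from successive anchor vertices, then apply pigeonhole to the resulting $t+1$ colors to find a matching pair. The only cosmetic difference is that the paper fixes its $t+1$ anchors $a_1,\ldots,a_{t+1}$ in advance from $X$ rather than drawing each new $v_m$ from the current refined set $X_{m-1}$, but the argument is otherwise identical (and your variant, as you observe, is the one that extends cleanly to the full infinite Ramsey theorem).
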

For ease of notation we assume that $X$ is countable.
\begin{proof}
One can construct the required sets step by step.

Choose any set $A=\{a_1, a_2, \ldots , a_{t+1}\}\subset X$ of $t+1$ distinct 
elements as vertices.
Let $v_1=a_1$. There are infinitely many adjacent edges $(v_1, x_j)$.
Hence one
of the $t$ colors, say color $c_1$, occurs infinitely often.
Let $X_1=\{x_{1,j}:j \in \N\} \subset X$ be the set of those elements such that
$(v_1, x_{1,j})$ are these infinitely many edges of color $c_1$.
Now study the color of all $(a_i, x_{1,j})$ as follows.
There exists one color $c_2$ (say) that occurs infinitely often among 
the infinitely many edges  $(a_2, x_{1,j})$. Let $X_2=\{x_{2,j}:j \in \N\} 
\subset X_1$ be those elements such that  $(a_2, x_{2,j})$ are of color $c_2$.
If $c_1=c_2$ we have found the
required substructure with $V=\{a_1,a_2\}$ and $W=X_2$.
We therefore assume that $c_1\neq c_2$.
We iterate the step above and come to infinite subsets
$X_{t+1} \subset X_t \subset \cdots \subset X_3\subset X_2 \subset X_1 
\subset X$ such that for fixed $i$ all 
edges $(a_i, x_{i,j}), j \in \N$, are of color $c_i$ (say).
As there are $t$ distinct colors only, there must be two distinct 
indices $i_1,i_2\in \{1, \ldots ,t+1\}$ such
that $c_{i_1}=c_{i_2}$. With $i_1 <i_2$ without loss of generality
and $V=\{a_{i_1}, a_{i_2}\}, W=X_{i_2}$ and the lemma is proved.

An alternative is to color the elements $x \in X\backslash A$ 
with the vector color $(c_1, \ldots, c_{t+1})$ if the color of the edge
$(a_i,x)$ is $c_i$, $i=1, \ldots , t+1$. 
As there is only a finite number of vector colors, namely
 $t^{t+1}$, there is an infinite
number of $x \in X\backslash A$ with the same vector color, which 
defines the set
$W$. As before, there
are two indices $i_1 \neq i_2$ such that $c_{i_1}=c_{i_2}$. Hence 
$V=\{a_{i_1}, a_{i_2}\}$ and $W$ are the sets required.
\end{proof}
\begin{theorem}{\label{thm:K2inf-implies-Euclid}}
The $K_{2,\infty}$ lemma implies Euclid's theorem.
\end{theorem}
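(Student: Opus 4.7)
My plan is to reproduce the exponent-pattern coloring from Theorems~\ref{thm:fermat-euclid} and~\ref{thm:Hindman-implies-Euclid}, but to feed it into the $K_{2,\infty}$ lemma rather than into Schur's or Hindman's theorem. Assume for contradiction that only finitely many primes $p_1,\dots,p_k$ exist and fix some $n\geq 2$. I would color each positive integer $m=\prod_{i=1}^k p_i^{nq_i+r_i}$, with $0\leq r_i\leq n-1$, by the residue vector $(r_1,\dots,r_k)$, obtaining $n^k$ colors and the decomposition $m=R(m)\,N(m)^n$ used in the previous proofs.

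To turn this into a coloring that $K_{2,\infty}$ can consume, I would mimic the Ramsey-to-Schur reduction recalled just after Theorem~\ref{thm:fermat-euclid}: color the edge $\{i,j\}$ of the complete graph on $\N$ (for $i<j$) by the color of the difference $j-i$. The $K_{2,\infty}$ lemma then produces a pair $V=\{v_1,v_2\}$ with $v_1<v_2$ and an infinite $W\subset\N\setminus V$ such that every edge from $V$ into $W$ receives a single common color. After discarding the at most finitely many $w\in W$ with $w\leq v_2$, I may assume $w>v_2$ for all $w\in W$, so that $w-v_1$ and $w-v_2$ are positive integers sharing a common exponent-pattern $(r_1,\dots,r_k)$ and hence a common $n$th-power-free factor $R=\prod_{i=1}^k p_i^{r_i}$.

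The contradiction should then fall out essentially automatically. Writing $w-v_1=R\cdot a(w)^n$ and $w-v_2=R\cdot b(w)^n$ with $a(w)>b(w)\geq 1$, subtraction gives
\[
R\bigl(a(w)^n-b(w)^n\bigr)=v_2-v_1,
\]
so $D:=(v_2-v_1)/R$ is a fixed positive integer, and each $w\in W$ yields a pair of positive $n$th powers differing by exactly $D$. But only finitely many pairs of positive integers can satisfy $a^n-b^n=D$ (already $a^{n-1}\leq a^n-b^n=D$ bounds $a$), whereas the infinitude of $W$ produces infinitely many distinct values of $a(w)$. The one step that warrants care is the passage to the tail $w>v_2$, which ensures positive differences and that the quotient $D$ is an honest positive integer; beyond that the argument is strictly lighter than the infinite-Ramsey derivation behind Theorem~\ref{thm:IRT-implies-Euclid}, as promised.
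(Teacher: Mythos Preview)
Your argument is correct and follows essentially the paper's own proof: assume finitely many primes, color the edges of the complete graph on $\N$ via the exponent-residue vector, apply the $K_{2,\infty}$ lemma, and conclude that a fixed nonzero constant equals the difference of infinitely many pairs of $n$th powers. The only cosmetic distinction is that the paper colors the edge $(m_i,m_j)$ by $\chi(m_i+m_j)$ rather than by $\chi(|m_i-m_j|)$, which spares the passage to the tail $w>v_2$ but is otherwise the same idea.
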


\begin{proof}
Let $n\geq 2$, and assume that $p_1, \ldots , p_k$ is the list of all primes.
 We color the integers by the same rule as before:
$m= \prod_{i=1}^k p_i^{n q_i +r_i}$ is colored by $\chi(m)=(r_1, \ldots ,r_k)$.
Based on this coloring we define an infinite graph on the positive integers. 
The edges $(m_i,m_j)$ receive the color $\chi(m_i+m_j)$.

We apply the $K_{2,\infty}$ lemma to this graph: 
there exists a complete bipartite graph with parts $V=\{v_1,v_2\}$ and an
infinite set $W$
 such that all
 edges $(v_i,w_j)$, with $i\in \{1,2\}$ and $j \in \N$, have the same color
 $(r_1, \ldots ,r_k)$.

We multiply all integers in $\N$ by the constant
$P=\prod_{i=1}^k p_i^{n-r_i}$.
All pairwise sums
$Pv_i+Pw_j=P(v_i+w_j)$ are an $n$th power $z_{i,j}^n$ (say). Note
that $z_{2,j}^n-z_{1,j}^n=P(v_2-v_1)$ is a constant, 
and is also the distance between infinitely many distinct
pairs of $n$th powers, for the infinitely many values $j$.
This is impossible, as
the gap between consecutive $n$th powers increases (see above).
\end{proof}

With Hindman's theorem we
made use of a quite advanced combinatorial result, 
and the number-theoretic part became correspondingly quite simple.
We then reduced the depth of the combinatorial lemma until we reached 
the $K_{2, \infty}$ lemma.
On the number-theoretic side, we eventually used the elementary
fact that the gaps between consecutive $n$th powers increase and simple
arithmetic such as $P(m_i+m_j)=Pm_i+Pm_j$.

\section{Conclusion.}
 
As our journey through a fictional world comes to an end, 
let us briefly reflect:
a common theme in all variants discussed is that the existence of only 
finitely many
primes would guarantee patterns for the set of $n$th powers that cannot
actually exist, sometimes for deep reasons, 
sometimes for obvious ones, depending on
the strength of the pattern.
Summarizing the results we find:
\begin{corollary}
In the ``world with only finitely many primes'' the following hold:
\begin{enumerate}
\item
If Schur's theorem holds, then $\FLT(n)$ is wrong for all $n \geq 3$.\\
If $\FLT(n)$ holds for some $n \geq 3$, then Schur's theorem does not hold. 
\item If Roth's theorem holds, then $\DM(n)$ is wrong for all $n \geq 3$.\\
If $\DM(n)$ holds for some $n \geq 3$, then Roth's theorem does not hold. 
\item The set of $n$th powers has positive density
  (giving an immediate contradiction).
\item Hindman's theorem does not hold.
\item The infinite Ramsey theorem (IRT) does not hold.
\item The $K_{2, \infty}$ lemma does not hold.
\end{enumerate}
\end{corollary}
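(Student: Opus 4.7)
The plan is to assemble the six items as contrapositives of results already proved earlier in the paper, all executed inside the fictional world with only finitely many primes $p_1 < \cdots < p_k$. Throughout we keep the two familiar devices: the coloring $\chi(m) = (r_1, \ldots, r_k)$ obtained from the factorization $m = \prod_{i=1}^{k} p_i^{n q_i + r_i}$ with $0 \le r_i \le n-1$, together with the observation that on any monochromatic set the common factor $R = \prod_{i=1}^{k} p_i^{r_i}$ may be cancelled, leaving an equation or progression purely among $n$th powers.

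For item 1, I would re-run the proof of Theorem \ref{thm:fermat-euclid}: assuming Schur's theorem together with $\FLT(n)$ in this world produces a monochromatic Schur triple $a + b = c$ which, after cancellation of $R$, becomes a nontrivial solution of $x^n + y^n = z^n$, contradicting $\FLT(n)$. The two therefore cannot hold simultaneously, and this is precisely the biconditional phrasing of the item. Item 2 is identical with Roth's theorem in place of Schur's and $\DM(n)$ in place of $\FLT(n)$, using Lemma \ref{positive-density} to supply the density that feeds Roth's theorem; the nontrivial three-term progression of $n$th powers so obtained violates $\DM(n)$.

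Item 3 is immediate from Lemma \ref{positive-density}: it yields at least $\delta N$ many $n$th powers in $[1,N]$, while there are trivially at most $N^{1/n}$ such powers, and for $n \ge 2$ these bounds contradict each other for all sufficiently large $N$. Items 4, 5, and 6 are the direct contrapositives of Theorems \ref{thm:Hindman-implies-Euclid}, \ref{thm:IRT-implies-Euclid}, and \ref{thm:K2inf-implies-Euclid} respectively: each of those theorems asserts that a combinatorial principle implies Euclid's theorem, so in a world where Euclid's theorem fails the principle fails as well.

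The main obstacle is conceptual rather than technical: one has to be sure that each combinatorial input (Schur, Roth, Hindman, $\IRT$, and the $K_{2,\infty}$ lemma) has a proof that is genuinely independent of the infinitude of primes, for otherwise the contrapositive statement would be vacuous or circular. Since all five admit purely combinatorial proofs (as noted explicitly in the paper for Schur and Roth, and evident from the direct constructions for the others), this concern dissolves, and the corollary amounts to a parallel execution of six already-completed arguments.
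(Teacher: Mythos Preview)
Your proposal is correct and matches the paper's treatment: the paper presents the corollary purely as a summary (``Summarizing the results we find:'') with no separate proof, and your write-up simply makes explicit that each item is the contrapositive of the corresponding earlier theorem. Your final paragraph about needing the combinatorial inputs to be provable independently of Euclid is a slight over-worry for this corollary: the items are phrased conditionally (items 1--2) or follow by pure contraposition (items 4--6), so their logical validity does not depend on how the combinatorial lemmas are proved; that independence matters only for the separate question of whether Theorems~\ref{thm:fermat-euclid}--\ref{thm:K2inf-implies-Euclid} constitute genuine new proofs of Euclid.
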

In other words, Euclid's theorem is logically connected
with many interesting and seemingly unrelated results in mathematics.

Having seen all these variants and extensions, the original version, 
i.e., the combination of Schur's
theorem and the Fermat--Wiles theorem is the one that
looks most intriguing to this author. And Fermat's last theorem
may be the one that many of us
would miss most in the fictional ``world with only finitely many primes''!

\begin{acknowledgment}{Acknowledgments.}
The author would like to thank the referees, the editor,
R.~Dietmann, 
J.~Erde, I.~Leader, R.~Me\v{s}trovi\'{c},
J.-C.~Schlage-Puchta and A.~Wiles
for useful comments on the manuscript.
The author was partially supported by the Austrian Science Fund (FWF): W1230
and I 4945-N.
\end{acknowledgment}

  
\begin{affil}
Christian Elsholtz, Institut f\"ur Analysis und Zahlentheorie,
Technische Universit\"at Graz,
Kopernikusgasse 24,
A-8010 Graz, Austria.\\
{\it elsholtz@math.tugraz.at}
\end{affil}


\begin{thebibliography}{99}
\bibitem{Alpoge:2015}
Alpoge, L. (2015).
van der Waerden and the primes. \textit{Amer. Math. Monthly.} 122(8): 784--785.

\bibitem{Baumgartner:1974}
Baumgartner, J. E. (1974).
A short proof of Hindman's theorem. 
\textit{J. of Combin. Theory Ser. A.} 17: 384--386.

\bibitem{Bourgain:2008}
Bourgain, J. (2008).
Roth's theorem on progressions revisited.
\textit{J. Anal. Math.} 104: 155--192. 


\bibitem{Cameron}
Cameron, P. J. (1995).
\textit{Combinatorics: Topics, Techniques, Algorithms.} 
Cambridge: Cambridge Univ.~Press.

\bibitem{Carmichael}
Carmichael, R. D. (1915).
\textit{Diophantine Analysis.}
New York: John Wiley and Sons.

\bibitem{Chaitin}
Chaitin, G. J. (2005).
\textit{Meta Math! The Quest for Omega.}
New York: Pantheon Books.

\bibitem{DarmonandMerel}
Darmon, H., Merel, L. (1997).
Winding quotients and some variants of Fermat's last theorem.
\textit{ J. Reine Angew. Math.} 490: 81--100. 

\bibitem{Diestel}
Diestel, R. (2017). \textit{Graph Theory,}
5th ed. Graduate Texts in Mathematics, 173. Berlin: Springer-Verlag.



\bibitem{Edwards}
Edwards; H.~N. (1996).
\textit{Fermat's Last Theorem.}
New York: Springer-Verlag.

\bibitem{Elsholtz:2012}
Elsholtz, C. (2012).
Prime divisors of thin sequences. \textit{Amer.~Math.~Monthly.} 119(4): 331--333.

\bibitem{Erdos:1938}
Erd\H{o}s, P. (1938).
\"Uber die Reihe $\sum {1 \over p}$. 
\textit{Mathematica, Zutphen B.} 7: 1--2.

\bibitem{Faltings:1983}
Faltings, G. (1983).
Endlichkeitss\"{a}tze f\"ur abelsche Variet\"aten \"uber Zahlk\"orpern.
\textit{Invent. Math.} 73(3): 349--366. 

\bibitem{Frankl-Graham-Rodl:1988}
Frankl, P., Graham, R. L., R\"odl, V. (1988). 
Quantitative theorems for regular systems of equations. 
\textit{J.~Combin.~Theory Ser.~A.} 47(2): 246--261. 

\bibitem{Furstenberg}
  Furstenberg, H. (1955). On the infinitude of primes.
\textit{Amer. Math. Monthly.} 62(5): 353.


\bibitem{Gowers:2001}
Gowers, W. T. (2001).
A new proof of Szemer\'{e}di's theorem.
\textit{Geom. Funct. Anal.} 11(3): 465--588. 

\bibitem{Graham-Rothschild-Spencer}
Graham, R. L., Rothschild, B. L., Spencer, J. H. (1990).
\textit{Ramsey Theory}, 2nd ed. New York: John Wiley \& Sons.

\bibitem{GranvilleLMS}
Granville, A. (2017).
A panopoly of proofs that there are infinitely many primes.
\textit{London Math. Soc. Newsletter.} 472: 23--27.

\bibitem{Granville}
Granville, A. (2017). 
Squares in arithmetic progressions and infinitely many primes.
\textit{Amer. Math. Monthly.} 124(10): 951--954.

\bibitem{GreenandTao:2017}
Green, B., Tao, T. (2017).
New bounds for Szemer\'{e}di's theorem, III: 
a polylogarithmic bound for $r_4(N)$.
\textit{Mathematika.} 63(3): 944-1040. 

\bibitem{Hindman:1974}
Hindman, N. (1974).
Finite sums from sequences within cells of a partition of N. 
\textit{J. Comb. Theory Ser. A.} 17:1--11.

\bibitem{Mestrovic}
Me\v{s}trovi\'{c}, R. (2018)
Euclid's theorem on the infinitude of primes: 
a historical survey of its proofs (300 B.C.--2017), version 3.
arxiv.org/abs/1202.3670

\bibitem{Polya:1918}
P\'{o}lya, G. (1918).
Zur arithmetischen Untersuchung der Polynome.
\textit{Math. Z.} 1:143--148.

\bibitem{Ramsey}
Ramsey, F. P. (1929).
On a problem of formal logic.
\textit{Proc. London Math. Soc. Ser. (2).} 30(4): 264--286. 


\bibitem{Ribenboim-Fermatbook}
Ribenboim, P. (1999).
\textit{Fermat's Last Theorem for Amateurs}. 
New York: Springer-Verlag.

\bibitem{Ribenboim}
Ribenboim, P. (2004). 
\textit{The Little Book of Bigger Primes}, 2nd ed. New York: 
Springer-Verlag.

\bibitem{Roth}
Roth, K. (1953). On certain sets of integers.
\textit{J. London Math. Soc.} 28: 104--109.

\bibitem{Saidak:2006}
Saidak, F. (2006). A new proof of Euclid's theorem.
\textit{Amer. Math. Monthly.} 113(10): 937--938.

\bibitem{Schur}
Schur, I. (1916).
\"Uber die Kongruenz $x^m + y^m \equiv z^m \, (\bmod p)$.
\textit{Jahresber. Dtsch. Math.-Ver.} 25: 114---117.

\bibitem{Sierpinski}
Sierpi\'{n}ski, W. (1964).
\textit{Elementary Theory of Numbers}.
Monografie Matematyczne Vol.  42, Warsaw: 
Pa\'{n}stwowe Wydawnictwo Naukowe. 

\bibitem{Szemeredi:1975}
Szemer\'{e}di, E. (1975).
On sets of integers containing no $k$ elements in arithmetic progression.
\textit{Acta Arithmetica.} 27: 199--245.

\bibitem{Varnavides:1959}
Varnavides, P. (1959).
On certain sets of positive density.
\textit{J. London Math. Soc.} 34: 358--360. 

\bibitem{Wiles:1995}
Wiles, A. (1995). Modular elliptic curves and Fermat's last theorem.
\textit{Ann. of Math. (2).} 141(3): 443--551. 

\bibitem{Wooley}
Wooley, T. D. (2017).
A superpowered Euclidean prime generator.
\textit{Amer. Math. Monthly.} 124(4): 351--352. 
\end{thebibliography}
\end{document}